\numberwithin{equation}{section}
\numberwithin{figure}{section}
\theoremstyle{plain}
\newtheorem*{thm*}{\protect\theoremname}
\theoremstyle{plain}
\newtheorem{thm}{\protect\theoremname}
\theoremstyle{definition}
\newtheorem{defn}[thm]{\protect\definitionname}
\theoremstyle{definition}
\newtheorem{example}[thm]{\protect\examplename}
\theoremstyle{remark}
\newtheorem{rem}[thm]{\protect\remarkname}
\theoremstyle{plain}
\newtheorem{prop}[thm]{\protect\propositionname}
\theoremstyle{plain}
\newtheorem{lem}[thm]{\protect\lemmaname}
\theoremstyle{plain}
\newtheorem{cor}[thm]{\protect\corollaryname}
\theoremstyle{definition}
\theoremstyle{theorem}
\theoremstyle{problem}
\newtheorem*{question}{Question}
\providecommand{\corollaryname}{Corollary}
\providecommand{\definitionname}{Definition}
\providecommand{\examplename}{Example}
\providecommand{\lemmaname}{Lemma}
\providecommand{\propositionname}{Proposition}
\providecommand{\remarkname}{Remark}
\providecommand{\theoremname}{Theorem}
\begin{document}
\title{Isometric Embeddings of Conformally Compact Manifolds into Hyperbolic
Spaces}
\author{Marco Usula}
\begin{abstract}
The celebrated Nash Embedding Theorem asserts that every closed Riemannian
manifold can be isometrically embedded into a sufficiently high-dimensional
Euclidean space. In this paper, we prove an analogous result in the
conformally compact context. Let $\left(M,g\right)$ be a conformally
compact manifold whose sectional curvature at infinity is strictly
bounded below by a negative constant $-\lambda^{2}$. We prove that
$\left(M,g\right)$ can be realized as a submanifold, transverse to
the sphere at infinity, of a sufficiently high-dimensional rescaled
hyperbolic space of constant curvature $-\lambda^{2}$.
\end{abstract}

\maketitle

\section{Introduction}

The Nash Embedding Theorem is a result of fundamental philosophical
importance in differential geometry. The theorem states that every
smooth Riemannian manifold $\left(M,g\right)$ can be embedded smoothly
and isometrically in Euclidean space $\mathbb{R}^{N}$, for $N$ sufficiently
large. This theorem was proved by John Nash in 1954---1956; the first
version of the theorem asserts the existence of $C^{1}$ isometric
embeddings \cite{NashC1IsometricImbeddings}, while the second version
generalizes this result to $C^{\infty}$ embeddings \cite{NashImbeddingProblem}.
Besides the intrinsic interest of the result, the paper \cite{NashImbeddingProblem}
introduced an extremely important tool in modern mathematics, the
\emph{Nash--Moser Technique}, which is central in many existence
theorems for solutions of PDEs. The original proof of Nash was subsequently
simplified drastically by Günther \cite{GuntherIsometricEmbeddings, GuntherPerturbationProblem},
who managed to reduce the proof to a standard fixed point argument,
by a clever use of elliptic theory. We refer to \cite{YangNashIsometricEmbedding, TaoNashEmbedding}
for expositions on this alternative proof. The literature on the Nash
Embedding Theorem and the Nash--Moser Technique is vast, and testifies
to the large impact of these results on the mathematical community.
We point the reader to \cite{GromovPDR, GromovDescendants} for a
much more in-depth discussion on the Nash Embedding Theorem, and the
current trends of research stemming from it.

The aim of this paper is to prove a similar embedding theorem, in
the context of \emph{conformally compact geometry}. A conformally
compact manifold is a compact manifold with boundary $M$, equipped
with a metric $g$ defined on the \emph{interior} of $M$, such that
if $x$ is a boundary defining function\footnote{A smooth function $x:M\to[0,+\infty)$ such that $x^{-1}\left(0\right)=\partial M$
and $0$ is a regular value for $x$.} for $M$, then the conformal rescaling $x^{2}g$ extends smoothly
to a metric on the whole of $M$. The typical example of a conformally
compact manifold is hyperbolic space: the hyperbolic metric can be
seen as the conformally compact metric
\[
\frac{4d\boldsymbol{y}^{2}}{\left(1-\left|\boldsymbol{y}\right|^{2}\right)^{2}}
\]
on the interior of the unit ball in $\mathbb{R}^{n}$. Analogously
to the hyperbolic metric, conformally compact metrics are complete
with bounded geometry, they induce a conformal class on the boundary
(the so-called \emph{conformal infinity} of the metric), and they
are asymptotically negatively curved at infinity. More precisely,
each conformally compact manifold $\left(M,g\right)$ has a natural
Riemannian invariant called the \emph{sectional curvature at infinity}:
this is a strictly negative function $\kappa_{\infty}^{g}\in C^{\infty}\left(\partial M\right)$,
such that for every sequence of tangent $2$-planes to $M^{\circ}$
converging to a tangent $2$-plane to $M$ at a point $p\in\partial M$,
the corresponding sequence of sectional curvatures converges to $\kappa_{\infty}^{g}\left(p\right)$.
When $\kappa_{\infty}^{g}\equiv-1$ identically, these metrics are
called \emph{asymptotically hyperbolic}. Conformally compact and asymptotically
hyperbolic manifolds have been intensely studied since the 80s, starting
with the pioneering works of Fefferman--Graham \cite{FeffermanGraham},
Mazzeo \cite{MazzeoPhD, MazzeoHodge} and Mazzeo--Melrose \cite{MazzeoMelroseResolvent}.
Of particular interest are the conformally compact and \emph{Einstein
}manifolds, also called \emph{Poincaré--Einstein }manifolds, which
play a central role in the Riemannian version of the AdS-CFT correspondence
\cite{Maldacena, WittenHolography, Biquard_Metriques, GrahamLee, LeeFredholm, UsulaYM, LimaEinsteinYangMills}.

Denote by $\mathrm{H}^{N+1}\left(-\lambda^{2}\right)$ the rescaled
hyperbolic space of constant sectional curvature $-\lambda^{2}$.
We consider compact submanifolds with boundary $M\subset\mathrm{H}^{N+1}\left(-\lambda^{2}\right)$,
such that $\partial M$ is contained in the sphere at infinity of
$\mathrm{H}^{N+1}\left(-\lambda^{2}\right)$, $M^{\circ}$ is contained
in the interior of $\mathrm{H}^{N+1}\left(-\lambda^{2}\right)$, and
moreover $M$ is transverse to the boundary at infinity. Following
Melrose's terminology developed in the book \cite{MelroseCorners},
we call these submanifolds \emph{interior p-submanifolds}. This class
of submanifolds is natural in conformally compact geometry; indeed,
we prove in Proposition \ref{prop:p-submanifolds-of-CC-are-CC} that
if $M$ is an interior p-submanifold of a conformally compact manifold
$\left(N,h\right)$, then the ambient conformally compact metric $h$
on $N$ induces a conformally compact metric $g$ on $M$. In this
case, the sectional curvature at infinity of $g$ is pointwise bounded
below by the sectional curvature at infinity of $h$ along $\partial M$.

We prove in Proposition \ref{prop:p-submanifolds-of-CC-are-CC} that
every interior p-submanifold $\left(M,g\right)$ of $\mathrm{H}^{N+1}\left(-\lambda^{2}\right)$
must satisfy the pointwise inequality $\kappa_{\infty}^{g}\geq-\lambda^{2}$.
The natural question that arises is: if $\left(M,g\right)$ is a conformally
compact manifold with $\kappa_{\infty}^{g}\geq-\lambda^{2}$, can
$\left(M,g\right)$ be realized as an interior p-submanifold of $\mathrm{H}^{N+1}\left(-\lambda^{2}\right)$,
for $N$ sufficiently large? In this paper, we answer this question
positively if the inequality is \emph{strict}. More precisely, we
prove the following
\begin{thm*}
Let $\left(M^{m},g\right)$ be a $m$-dimensional conformally compact
manifold, whose sectional curvature at infinity satisfies the pointwise
inequality $\kappa_{\infty}^{g}>-\lambda^{2}$ for some $\lambda\not=0$.
Then $\left(M,g\right)$ can be realized as an interior p-submanifold
of $\mathrm{H}^{\mathrm{N}_{m}+1}\left(-\lambda^{2}\right)$, where
$\mathrm{N}_{m}$ is the smallest positive integer such that every
closed $m$-dimensional Riemannian manifold embeds isometrically into
$\mathbb{R}^{\mathrm{N}_{m}}$.
\end{thm*}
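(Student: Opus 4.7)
My plan is to realize the embedding in the half-space model of $\mathrm{H}^{N_m+1}(-\lambda^{2})$, viewed as $\mathbb{R}^{N_m}_y\times\mathbb{R}_{x'>0}$ with metric $g_H=(|dy|^{2}+dx'^{2})/(\lambda^{2}x'^{2})$. The ansatz is a map of the form $F(p)=(F_{1}(p),x(p))$, where $x$ is a judiciously chosen boundary defining function for $M$ and $F_{1}$ is an isometric embedding of $M$ into $\mathbb{R}^{N_m}$ for a suitable auxiliary Riemannian metric $h$. The second coordinate $F_{2}=x$ automatically enforces the correct asymptotic behaviour ($F_{2}\to0$ at $\partial M$) and the transversality condition required for an interior p-submanifold.

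The algebraic identity driving the construction is
\[
F^{*}g_H=\frac{|dF_{1}|^{2}+dx^{2}}{\lambda^{2}x^{2}}.
\]
Setting $\tilde g=x^{2}g$ and $h:=\lambda^{2}\tilde g-dx^{2}$, and requiring $|dF_{1}|^{2}=h$, one gets $F^{*}g_H=(h+dx^{2})/(\lambda^{2}x^{2})=\tilde g/x^{2}=g$, as required. The tensor $h$ is a positive-definite Riemannian metric on $M$ if and only if $|dx|^{2}_{\tilde g}<\lambda^{2}$ pointwise on $M$. Since the hypothesis $\kappa_{\infty}^{g}>-\lambda^{2}$ is exactly the statement $|dx|^{2}_{\tilde g}<\lambda^{2}$ along $\partial M$, the main technical step is to extend this inequality from $\partial M$ to all of $M$ by choosing $x$ appropriately.

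To do so, starting from an arbitrary defining function $x_{0}$ with $\tilde g_{0}=x_{0}^{2}g$, I would set $x=c\tanh(x_{0}/c)$ for a small parameter $c>0$. A direct computation using the conformal behaviour of the cometric yields
\[
|dx|^{2}_{\tilde g}=\left(\frac{2t}{\sinh 2t}\right)^{\!2}|dx_{0}|^{2}_{\tilde g_{0}},\qquad t=x_{0}/c.
\]
The factor $(2t/\sinh 2t)^{2}$ equals $1$ at $t=0$, is bounded above by $1$ on $[0,\infty)$, and decays exponentially for large $t$. Therefore, for $c$ small, this factor becomes arbitrarily small outside any fixed collar $\{x_{0}\le\varepsilon\}$ of $\partial M$; inside the collar, the hypothesis together with continuity gives $|dx|^{2}_{\tilde g}\le|dx_{0}|^{2}_{\tilde g_{0}}<\lambda^{2}$. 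Compactness of $M$ then yields the global bound. This is the only place where the strict inequality in the hypothesis is essential.

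Once $h$ is known to be a smooth Riemannian metric on the compact manifold with boundary $M$, I invoke Nash's embedding theorem to produce a smooth isometric embedding $F_{1}:(M,h)\to\mathbb{R}^{N_m}$; the reduction from closed manifolds (for which $N_m$ is defined) to this case is standard, via embedding $(M,h)$ into the smooth double $DM$ endowed with a partition-of-unity extension of $h$, applying the closed version, and restricting. The resulting map $F=(F_{1},x)$ is then smooth, injective and an immersion (inheriting these properties from $F_{1}$), its image lies in the closed half-space and stays away from the point at infinity since $F_{1}$ is bounded, it meets $\{x'=0\}$ exactly along $\partial M$, and it is transverse to this boundary because $dx\neq 0$ there. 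The isometry $F^{*}g_H=g$ is then immediate from the identity in the second paragraph. The hardest part of the argument is the defining-function lemma of the third paragraph; once that is in place, the rest reduces to a clean application of Nash's theorem combined with a one-line verification.
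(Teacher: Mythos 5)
Your argument is correct, and its overall architecture coincides with the paper's: pass to the half-space model, produce a boundary defining function $x$ for which $\lambda^{2}x^{2}g-dx^{2}$ is a genuine Riemannian metric on the compact manifold with boundary $M$, extend that metric to the smooth double, apply the closed Nash theorem and restrict, and assemble $F=(F_{1},x)$, with the isometry following from the same one-line cancellation. The one place where you genuinely diverge is the construction of the special defining function, which is the technical heart of the proof in both treatments. The paper (Lemma \ref{lem:very-special-bdf}) builds a collar from the flow of the normalized gradient $\overline{V}=\left|dr\right|_{\overline{g}}^{-2}\overline{\nabla}r$, writes $g$ in the normal form $K^{-2}dr^{2}/r^{2}+h\left(r\right)/r^{2}$ with $K^{2}<1$ near $\partial M$, and takes $x=e^{\varphi\left(r\right)}r$ with $1+r\partial_{r}\varphi=1-Q\left(r\right)\in\left[0,1\right]$ vanishing for $r\geq\varepsilon/2$, so that $x$ becomes locally constant and extends to the rest of $M$ with $dx\equiv0$ there. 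Your substitute $x=c\tanh\left(x_{0}/c\right)$ achieves the same two effects --- the conformal factor $\left(2t/\sinh2t\right)^{2}$ is at most $1$, so the strict boundary inequality survives on a collar, and it decays to $0$, so the compact region $\left\{ x_{0}\geq\varepsilon\right\} $ is handled for $c$ small --- by a single explicit global formula, with no collar, no normal form, and no cutoff; your computation of $\left|dx\right|_{x^{2}g}^{2}$ via the conformal rescaling of the cometric checks out. What the paper's route buys in exchange is the extra normalization that $x^{2}g$ induces a prescribed representative $h_{0}$ of the conformal infinity, which is not needed for the theorem itself. Two small points worth spelling out in a full write-up: that $x^{2}g=\left(\tanh t/t\right)^{2}x_{0}^{2}g$ still extends smoothly to a metric on $M$, so that $h=\lambda^{2}x^{2}g-dx^{2}$ is smooth up to the boundary, and the Cauchy--Schwarz observation that positivity of $h$ is equivalent to the pointwise bound $\left|dx\right|_{x^{2}g}^{2}<\lambda^{2}$; both are immediate.
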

The key part of the proof consists in constructing a boundary defining
function $x$ for $M$ such that the symmetric $2$-tensor $x^{2}g-\lambda^{-2}dx^{2}$
is positive-definite, and then applying the standard Nash Embedding
Theorem.

The proof proposed in this paper breaks if the equality in $\kappa_{\infty}^{g}\geq-\lambda^{2}$
is achieved at some point of the boundary. In particular, the paper
leaves open the following question: if $\left(M,g\right)$ is asymptotically
hyperbolic, can it be isometrically embedded into $\mathrm{H}^{N+1}\left(-1\right)$
for $N$ sufficiently large? The impression of the author is that
the answer to this question may be positive, but that its proof (if
the statement is true) may require substantial work beyond the techniques
adopted in this paper.

We conclude by discussing potential applications of this result. Besides
its intrinsic interest, the Nash Embedding Theorem has been used to
facilitate many arguments in geometric analysis. For example, in order
to define Sobolev spaces of maps between closed Riemannian manifolds,
it is convenient to embed the target manifold isometrically into some
Euclidean space \cite{HajlaszSobolevMappings}; this is particularly
useful when studying variational problems for maps. The author of
this paper has been recently interested in harmonic and biharmonic
maps between conformally compact manifolds \cite{UsulaBiharmonic};
if formulated in the appropriate way, these are both nonlinear ``$0$-elliptic''
equations, and as such, its solutions are expected to be polyhomogeneous\footnote{A function on a compact manifold with boundary is polyhomogeneous
if it is smooth in the interior and it admits a Taylor-like asymptotic
expansion at the boundary, with terms of the form $x^{\alpha}\left(\log x\right)^{l}$
for a boundary defining function $x$, $\alpha\in\mathbb{C}$, and
$l\in\mathbb{N}$.} provided that the boundary data is smooth, and the solutions satisfy
some weak interior regularity condition sufficient for a boot-strap
argument. The author was motivated to prove the main result of this
paper precisely in order to facilitate the proofs of these regularity
results. This topic will be developed in detail in an upcoming paper.
Besides this specific application, the author expects that the embedding
theorem presented here can find many applications in conformally compact
geometry, inspired by analogous applications of the Nash Embedding
Theorem as described, for example, in \cite{GromovPDR, GromovDescendants}.

\subsection*{Acknowledgements}

The author wishes to thank Rafe Mazzeo and Andrea Loi, for their encouragement
during the completion of this work.

\section{Conformally Compact Geometry}

Let $M$ be a compact manifold with boundary. Denote by $M^{\circ}$
the interior of $M$, and by $\partial M$ the boundary of $M$. In
this section, we recall the basic notions of conformally compact geometry.
Recall that a \emph{boundary defining function }for $M$ is a smooth
function $x:M\to[0,+\infty)$ such that $x^{-1}\left(0\right)=\partial M$
and $dx$ is nowhere vanishing along $\partial M$.
\begin{defn}
A \emph{conformally compact metric} on $M$ is a metric $g$ on the
interior $M^{\circ}$ with the following properties: for some (hence
every) boundary defining function $x$ on $M$, the rescaling $x^{2}g$
extends smoothly to a metric on $M$.
\end{defn}

\begin{example}
The paradigmatic example of a conformally compact manifold is hyperbolic
space. Denote by $B^{n}$ the closed unit ball in $\mathbb{R}^{n}$,
with coordinates $\boldsymbol{y}=\left(y^{1},...,y^{n}\right)$. The
hyperbolic metric is the metric on the interior of $B^{n}$ defined
as
\[
h=\frac{4d\boldsymbol{y}^{2}}{\left(1-\left|\boldsymbol{y}\right|^{2}\right)^{2}}.
\]
Call $\rho=1-\left|\boldsymbol{y}\right|^{2}$. It is immediate to
check that $\rho$ is a boundary defining function for $B^{n}$, and
$\rho^{2}h$ extends smoothly to the constant multiple $4d\boldsymbol{y}^{2}$
of the Euclidean metric on $B^{n}$.
\end{example}

Conformally compact manifolds possess many properties in common with
hyperbolic space: they are complete, with bounded geometry, and they
have infinite volume. Moreover, the boundary $\partial M$ is at infinite
distance from any point in the interior. A conformally compact metric
$g$ on $M$ does not induce a well-defined metric on $\partial M$;
rather, it induces a \emph{conformal class}, defined by
\[
\mathfrak{c}_{\infty}\left(g\right):=\left\{ \left(x^{2}g\right)_{|\partial M}:\text{\ensuremath{x} is a boundary defining function}\right\} .
\]
This conformal class is usually called the \emph{conformal infinity}
of $\left(M,g\right)$. In the case of $n+1$-dimensional hyperbolic
space, the conformal infinity is the round conformal class on the
$n$-sphere.

Strictly speaking, a conformally compact metric $g$ on $M$ is only
a metric in the interior. However, $g$ should really be thought of
as a smooth object on the whole of \emph{ }$M$. There is a very elegant
way to make this statement precise. This idea is formulated in §2.A
of \cite{MazzeoPhD}, inspired by an analogous construction of Melrose--Mendoza
\cite{MelroseMendozaTotallyCharacteristic, MelroseAPS}. Denote by
$\mathcal{V}_{0}\left(M\right)$ the space of smooth vector fields
on $M$ which vanish along the boundary; we call the elements of $\mathcal{V}_{0}\left(M\right)$
\emph{$0$-vector fields} on $M$. The space $\mathcal{V}_{0}\left(M\right)$
is a locally finitely generated, projective $C^{\infty}\left(M\right)$
module; therefore, by the Serre--Swan Theorem, $\mathcal{V}_{0}\left(M\right)$
can be realized as the module of sections of a smooth vector bundle
over $M$, called the \emph{$0$-tangent bundle}. We denote this bundle
by $^{0}TM$.

Every tensor bundle over $M$ is a bundle associated to the principal
$\text{GL}\left(m,\mathbb{R}\right)$ bundle $\text{Fr}\left(TM\right)\to M$
via a real linear representation of $\text{GL}\left(m,\mathbb{R}\right)$.
We can then define the corresponding ``$0$-tensor bundle'', by
replacing $\text{Fr}\left(TM\right)$ with the bundle $\text{Fr}\left(^{0}TM\right)$.
We mention in particular:
\begin{enumerate}
\item the \emph{$0$-cotangent bundle} $^{0}T^{*}M$, i.e. the dual of $^{0}TM$;
\item the \emph{bundle of $0$-$k$-forms }$^{0}\Lambda^{k}$, i.e. the
$k$-th exterior power of $^{0}T^{*}M$;
\item the\emph{ bundle of symmetric $0$-$2$-tensors}, $S^{2}\left(^{0}T^{*}M\right)$,
i.e. the second symmetric power of $^{0}T^{*}M$.
\end{enumerate}
If $E$ is a tensor bundle over $M$, and $^{0}E$ is the corresponding
$0$-tensor bundle, then $E$ and $^{0}E$ are canonically isomorphic
along the interior $M^{\circ}$. To see that, we define the \emph{anchor
map }$\#:{}^{0}TM\to TM$ as the bundle map induced by the $C^{\infty}\left(M\right)$
linear map on sections $\mathcal{V}_{0}\left(M\right)\to\mathcal{V}\left(M\right)$.
Since $0$-vector fields are unconstrained in the interior, the map
$\#_{p}:{^{0}T_{p}M}\to T_{p}M$ is an isomorphism at each $p\in M^{\circ}$.
This implies that we have canonical identifications $E_{|M^{\circ}}\equiv{^{0}E}_{|M^{\circ}}$.
These isomorphisms in general do not extend over the boundary: indeed,
the uniform degeneracy of $0$-vector fields along $\partial M$ implies
that the anchor map $\#$ \emph{vanishes identically }along $\partial M$.
As an effect of this, if $E$ is a tensor bundle of type $\left(r,s\right)$
(i.e. $r$ contravariant and $s$ covariant indices), then the sections
of the corresponding $0$-tensor bundle $^{0}E$ are precisely of
the form $\omega=x^{\left(r-s\right)}\overline{\omega}$, where $x$
is a boundary defining function and $\overline{\omega}$ is a smooth
section of $E$. For example, smooth sections of $^{0}\Lambda^{k}$
are precisely ``uniformly singular $k$-forms'' which can be written
as 
\[
\omega=\frac{\overline{\omega}}{x^{k}}
\]
where $\overline{\omega}$ is a smooth $k$-form on $M$. It is now
immediate to see that conformally compact metrics on $M$ are precisely
smooth, fibrewise positive definite symmetric $0$-$2$-tensors, i.e.
the smooth bundle metrics on $^{0}TM$. For this reason, conformally
compact metrics are sometimes called \emph{$0$-metrics}.

A fundamental property of conformally compact metrics is that they
are \emph{asymptotically negatively curved}. Let us be more precise.
\begin{defn}
Let $\left(M,g\right)$ be a conformally compact manifold. The \emph{sectional
curvature at infinity} of $g$ is the function
\[
\kappa_{\infty}^{g}:=-\left|\frac{dx}{x}\right|_{g|\partial M}^{2},
\]
where $x$ is a boundary defining function on $M$.
\end{defn}

\begin{rem}
\label{rem:dx/x_canonical}Note that the definition above is well-posed,
i.e. it is independent of $x$. To see this we first note that, although
the $1$-form $dx/x$ is singular when seen as a section of $\Lambda^{1}$,
it is in fact smooth up to the boundary as a section of $^{0}\Lambda^{1}$;
since $g$ is a bundle metric on $^{0}TM$, it induces a dual bundle
metric on $^{0}\Lambda^{1}$, so $\left|dx/x\right|_{g}^{2}$ is well-defined
up to the boundary. Now, although $dx/x$ clearly depends on $x$,
\emph{its restriction to $\partial M$ does not}. Indeed, if $\tilde{x}$
is another boundary defining function on $M$, we can write $\tilde{x}=e^{\varphi}x$
for some $\varphi\in C^{\infty}\left(M\right)$, so that
\[
\frac{d\tilde{x}}{\tilde{x}}=\frac{dx}{x}+d\varphi.
\]
The differential $d\varphi$ is a smooth section of $\Lambda^{1}=x\ {^{0}\Lambda^{1}}$;
therefore, as a section of $^{0}\Lambda^{1}$, it vanishes identically
along $\partial M$.
\end{rem}

The name of the invariant $\kappa_{\infty}^{g}$ defined above is
justified by the following result, due to Mazzeo:
\begin{prop}
(Proposition 1.10 of \cite{MazzeoPhD}) Let $\left(M,g\right)$ be
conformally compact. Let $\left\{ p_{k}\right\} _{k\in\mathbb{N}}\subset M^{\circ}$
be a sequence of points converging to a boundary point $p_{\infty}\in\partial M$,
and let $\pi_{k}\subseteq T_{p_{k}}M^{\circ}$ be a sequence of tangent
$2$-planes. Denote by $\kappa^{g}\left(\pi_{k}\right)$ the sectional
curvature of $g$ at $\pi_{k}$. Then
\[
\lim_{k\to+\infty}\kappa^{g}\left(\pi_{k}\right)=\kappa_{\infty}^{g}\left(p_{\infty}\right).
\]
\end{prop}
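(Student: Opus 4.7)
The plan is to express the Riemann tensor of $g$ in a local $0$-orthonormal frame near $p_\infty$, verify that its components extend smoothly up to $\partial M$, and identify their boundary values as those of a tensor of constant sectional curvature $\kappa_\infty^g(p_\infty)$. Continuity of the sectional curvature, viewed as a function of a point and a $2$-plane, then yields the result.

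Since $g$ is a smooth bundle metric on $^{0}TM$, Gram--Schmidt applied to a local frame of $^{0}TM$ produces a $0$-orthonormal frame $E_0, \ldots, E_{m-1}$ near $p_\infty$, with components smooth up to $\partial M$. Because $\mathcal{V}_0\left(M\right)$ is closed under the Lie bracket, the structure functions $c_{ij}^k$ defined by $[E_i, E_j] = c_{ij}^k E_k$ are smooth on $M$. For a $g$-orthonormal frame, the Koszul formula reduces to
\[
2\, g(\nabla_{E_i} E_j, E_k) = -g(E_i, [E_j, E_k]) + g(E_j, [E_k, E_i]) + g(E_k, [E_i, E_j]),
\]
so the Christoffel symbols $\Gamma_{ij}^k$, and hence the curvature components $R_{ijkl} := g(R(E_k, E_l) E_j, E_i)$, are smooth polynomial expressions in the $c_{ij}^k$ and extend smoothly up to $\partial M$.

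The core of the argument is to compute the boundary values of $R_{ijkl}$ in a carefully chosen frame. Take local coordinates $(x, y^1, \ldots, y^{m-1})$ near $p_\infty$ with $x$ a boundary defining function, and after a linear change of the $y$-coordinates arrange that the $0$-frame $\{x\partial_x, x\partial_{y^1}, \ldots, x\partial_{y^{m-1}}\}$ is $g$-orthogonal at $p_\infty$. Set $\lambda := \left|dx/x\right|_g$, smooth up to $\partial M$ by Remark \ref{rem:dx/x_canonical}, with $\lambda_0 := \lambda(p_\infty) = \sqrt{-\kappa_\infty^g(p_\infty)}$. Apply Gram--Schmidt to this coordinate frame to obtain a $0$-orthonormal frame $E_0, \ldots, E_{m-1}$ on a neighborhood of $p_\infty$, with $E_0|_{p_\infty} = \lambda_0 (x\partial_x)|_{p_\infty}$ and the $E_a$ tangential at $p_\infty$. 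A direct calculation of commutators shows that at $p_\infty$ one has $[E_0, E_a] = \lambda_0 E_a$, while $[E_a, E_b]$ picks up an extra factor of $x$ and vanishes there; substituting these boundary values into the Koszul formula and the curvature identity yields
\[
R_{ijkl}|_{p_\infty} = -\lambda_0^2\, (\delta_{ik}\delta_{jl} - \delta_{il}\delta_{jk}),
\]
so the boundary Riemann tensor at $p_\infty$ is that of constant sectional curvature $-\lambda_0^2 = \kappa_\infty^g(p_\infty)$.

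To conclude, given any sequence $p_k \to p_\infty$ in $M^\circ$ and $2$-planes $\pi_k \subseteq T_{p_k} M^\circ$, compactness of the $0$-Grassmannian over $M$ lets one extract a subsequence along which $\pi_k$ converges to a $2$-plane $\pi_\infty \subseteq {}^{0}T_{p_\infty} M$ spanned by $g$-orthonormal $0$-vectors $u_\infty, v_\infty$. For $g$-orthonormal $u_k, v_k$ spanning $\pi_k$, the sectional curvature $\kappa^g(\pi_k) = g(R(u_k, v_k) v_k, u_k)$ depends continuously on $(p_k, u_k, v_k)$ through the smooth components $R_{ijkl}$, and so converges along the subsequence to $-\lambda_0^2 = \kappa_\infty^g(p_\infty)$; since every subsequential limit agrees, the full sequence converges. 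The main obstacle is the commutator and curvature calculation in the third paragraph: one must carefully track which terms survive the limit $x \to 0$, and verify that the boundary curvature is exactly that of the hyperbolic model of sectional curvature $-\lambda_0^2$.
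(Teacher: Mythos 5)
The paper does not prove this proposition at all: it is quoted verbatim as Proposition 1.10 of Mazzeo's thesis, where the standard argument runs through the conformal-change formula for the curvature tensor applied to $g=x^{-2}\overline{g}$ — the term $-\left|dx\right|_{\overline{g}}^{2}\left(\overline{g}\owedge\overline{g}\right)x^{-4}$ dominates the $O\left(x^{-3}\right)$ Hessian terms and the $O\left(x^{-2}\right)$ contribution of $\overline{R}$, and dividing by the norms of the spanning vectors gives the limit. Your $0$-frame argument is a genuinely different and essentially correct route, more in the spirit of the $0$-calculus the paper uses elsewhere: the boundary structure constants $c_{0a}^{b}=\lambda_{0}\delta_{a}^{b}$, $c_{ab}^{c}=0$ are exactly those of the solvable group model of $\mathrm{H}^{m}\left(-\lambda_{0}^{2}\right)$, and your commutator computation and the final compactness argument on the Grassmannian bundle of ${}^{0}TM$ are sound (note that the independence of the limit from the limiting plane is what lets the subsequence argument close, since the hypothesis does not assume the $\pi_{k}$ converge). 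Two points deserve more care. First, the curvature components are not merely polynomial in the $c_{ij}^{k}$: they also involve the frame derivatives $E_{i}\left(\Gamma_{jk}^{l}\right)$. The saving grace — which you use implicitly but should state — is that $E_{i}$ is a $0$-vector field, so $E_{i}f$ vanishes on $\partial M$ for every $f\in C^{\infty}\left(M\right)$; this both gives smoothness of $R_{ijkl}$ up to the boundary and reduces the boundary curvature to the purely algebraic expression in the structure constants that you evaluate. Second, a linear change among the $y$-coordinates alone cannot make $x\partial_{x}$ orthogonal to the $x\partial_{y^{a}}$ at $p_{\infty}$; you need a shear $y^{a}\mapsto y^{a}-b^{a}x$, which replaces $x\partial_{x}$ by $x\partial_{x}+b^{a}x\partial_{y^{a}}$ and does the job since the Gram matrix of the $x\partial_{y^{a}}$ is invertible. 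This orthogonality is not cosmetic: without it $\left|x\partial_{x}\right|_{g}\left(p_{\infty}\right)\neq\lambda_{0}^{-1}$ and your normalization $E_{0}|_{p_{\infty}}=\lambda_{0}\left(x\partial_{x}\right)|_{p_{\infty}}$, hence the clean form of the boundary brackets, would fail.
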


The conformally compact metrics with curvature at infinity constantly
equal to $-1$ have a special name:
\begin{defn}
A conformally compact metric $g$ on $M$ is called \emph{asymptotically
hyperbolic} if $\kappa_{\infty}^{g}\equiv-1$.
\end{defn}

\section{Interior p-Submanifolds of Conformally Compact Manifolds}

Let $M$ be a compact manifold with boundary. By a (compact) \emph{submanifold}
of $M$, we mean the image of an injective immersion $\iota:S\to M$,
where $S$ is a compact manifold with or without boundary. Since $S$
is compact, the push-forward topology on $\iota\left(S\right)$ coincides
automatically with the subspace topology induced by $M$.
\begin{defn}
An \emph{interior p-submanifold} of $M$ is a compact submanifold
with boundary $S$ with the following properties:
\begin{enumerate}
\item $S^{\circ}\subset M^{\circ}$ and $\partial S\subset\partial M$
\item $S$ is transverse to $\partial M$.
\end{enumerate}
\end{defn}

The terminology ``interior p-submanifold'' is due to Melrose, cf.
§1.7 of \cite{MelroseCorners}. The letter ``p'' stands for ``product'',
because interior p-submanifolds admit tubular neighborhoods in $M$.
The notion of interior p-submanifold is related to another concept
due to Melrose:
\begin{defn}
Let $N$ be a compact manifold with boundary. A map\emph{ $u:M\to N$}
is said to be an \emph{interior $b$-map} if, given a boundary defining
function $X$ for $N$ and a boundary defining function $x$ for $M$,
the pull-back $u^{*}X$ can be written as $e^{\varphi}x^{k}$ for
some $\varphi\in C^{\infty}\left(M\right)$ and $k\in\mathbb{N}$.
\end{defn}

It is straightforward to check that the integer $k$ of the previous
definition does not depend on $X$ and $x$. Following \cite{UsulaBiharmonic},
when $k=1$ we say that $u:M\to N$ is a \emph{simple $b$-map}. In
other words, a map $u:M\to N$ is a simple $b$-map if and only if
the pull-back of a boundary defining function on $N$ is a boundary
defining function on $M$.
\begin{lem}
Let $u:M\to N$ be a smooth map such that $\partial M\mapsto\partial N$
and $M^{\circ}\mapsto N^{\circ}$. Then $u$ is a simple $b$-map
if and only if it is transverse to $\partial N$.
\end{lem}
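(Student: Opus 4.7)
The plan is to reformulate both conditions in terms of the pull-back $u^{\ast}X$, where $X$ is a fixed boundary defining function on $N$. Since $u\left(\partial M\right)\subseteq\partial N$ and $u\left(M^{\circ}\right)\subseteq N^{\circ}$, we have $\left(u^{\ast}X\right)^{-1}\left(0\right)=\partial M$ and $u^{\ast}X\geq0$ everywhere. Thus $u^{\ast}X$ is a boundary defining function for $M$ precisely when $d\left(u^{\ast}X\right)_{p}\not=0$ for every $p\in\partial M$, and the whole statement reduces to checking that this nonvanishing is equivalent to transversality, and then that writing a BDF as $e^{\varphi}x$ is automatic.

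For the forward implication, assume $u^{\ast}X=e^{\varphi}x$ with $\varphi\in C^{\infty}\left(M\right)$ and $x$ a BDF on $M$. A direct computation gives $d\left(u^{\ast}X\right)=e^{\varphi}dx+e^{\varphi}x\,d\varphi$, which restricted to $\partial M=\left\{ x=0\right\} $ equals $e^{\varphi}dx$, hence is nowhere zero. Pulling back through $u$, this says $dX_{u\left(p\right)}\circ du_{p}\not=0$ for every $p\in\partial M$, i.e.\ $du_{p}\left(T_{p}M\right)$ is not contained in $T_{u\left(p\right)}\partial N=\ker dX_{u\left(p\right)}$. This is exactly transversality of $u$ to $\partial N$.

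For the converse, suppose $u$ is transverse to $\partial N$. Then at every $p\in\partial M$ we have $du_{p}\left(T_{p}M\right)+T_{u\left(p\right)}\partial N=T_{u\left(p\right)}N$, which forces $d\left(u^{\ast}X\right)_{p}=dX_{u\left(p\right)}\circ du_{p}$ to be nonzero. Combined with the observations above, this shows that $u^{\ast}X$ is itself a boundary defining function for $M$. It remains to argue that any two boundary defining functions $y_{1},y_{2}$ on $M$ differ multiplicatively by the exponential of a smooth function. By Hadamard's lemma applied near $\partial M$, we can write $y_{1}=y_{2}h$ for some smooth $h$ defined near the boundary; since both $y_{i}$ are non-negative with nowhere-vanishing differential on $\left\{ y_{i}=0\right\} $, their differentials point in the same inward direction along $\partial M$, forcing $h>0$ on $\partial M$. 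Away from $\partial M$ both functions are strictly positive, so the ratio $y_{1}/y_{2}$ extends uniquely to a strictly positive smooth function on all of $M$, and $\varphi:=\log\left(u^{\ast}X/x\right)$ is the desired smooth function.

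I expect no serious obstacle: the only slightly delicate point is justifying that the ratio of two BDFs is smooth and positive \emph{globally} on $M$, not merely near the boundary, but this is an elementary patching argument using the positivity of both functions on $M^{\circ}$. The real geometric content is the equivalence between the nonvanishing of $d\left(u^{\ast}X\right)$ along $\partial M$ and the transversality of $u$, which is an immediate consequence of the chain rule together with the fact that $T_{q}\partial N=\ker dX_{q}$ for $q\in\partial N$.
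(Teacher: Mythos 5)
Your proof is correct and takes essentially the same route as the paper: fix a boundary defining function $X$ on $N$, observe that $\left(u^{*}X\right)^{-1}\left(0\right)=\partial M$, and use the chain rule together with $\ker dX_{q}=T_{q}\partial N$ to identify the nonvanishing of $d\left(u^{*}X\right)$ along $\partial M$ with transversality. The only difference is that you spell out the standard fact that any two boundary defining functions differ by a positive smooth factor, which the paper dispatches in the remark preceding the lemma ("a map is a simple $b$-map if and only if the pull-back of a boundary defining function is a boundary defining function").
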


\begin{proof}
Let $X$ be a boundary defining function for $N$, and let $x=u^{*}X$.
Since $X$ is a boundary defining function, and $\partial M\mapsto\partial N$
and $M^{\circ}\mapsto N^{\circ}$, it follows that $x^{-1}\left(0\right)=\partial M$.
It remains to prove that $dx$ is nowhere zero along $\partial M$.
For every $p\in\partial M$, we have
\[
dx_{p}=d\left(X\circ u\right)_{p}=dX_{u\left(p\right)}\circ du_{p}.
\]
Since $X$ is a boundary defining function for $N$, for every $q\in\partial N$
the kernel of $dX_{q}$ coincides with $T_{q}\partial N$. Therefore,
for every $p\in\partial M$, we have $dx_{p}=0$ if and only if $du_{p}$
maps $T_{p}M$ into $T_{u\left(p\right)}\partial N$, i.e. if and
only if $u$ is not transverse to $\partial N$ at $p$.
\end{proof}
This lemma immediately implies the following
\begin{cor}
\label{cor:p-submanifold-iff-image-immersion-bmap}A submanifold $S\subset M$
is an interior p-submanifold if and only if the inclusion $\iota:S\to M$
is an injective immersion and a simple $b$-map. In particular, if
$S\subset M$ is an interior p-submanifold and $X$ is a boundary
defining function for $M$, then the restriction $X_{|S}$ is a boundary
defining function for $S$.
\end{cor}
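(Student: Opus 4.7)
The plan is to reduce the corollary to the lemma immediately preceding it, since an interior p-submanifold is by definition a submanifold satisfying two conditions that match exactly the hypotheses and conclusion of that lemma. Observe first that the definition of ``submanifold'' adopted at the beginning of the section already bundles in the statement that $\iota \colon S \to M$ is an injective immersion, so the content of the equivalence reduces to showing that conditions (1) and (2) in the definition of interior p-submanifold together amount to $\iota$ being a simple $b$-map.

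For the forward direction, I would note that condition (1) is precisely the hypothesis $\partial S \mapsto \partial M$ and $S^{\circ}\mapsto M^{\circ}$ of the preceding lemma, applied to $u = \iota$, while condition (2) is the transversality of $\iota$ to $\partial M$. The lemma then delivers immediately that $\iota$ is a simple $b$-map. Conversely, assume $\iota$ is a simple $b$-map. By the reformulation stated just before the lemma, the pull-back $\iota^{*}X$ of any boundary defining function $X$ on $M$ is a boundary defining function on $S$; hence its zero set, which on the one hand is $\partial S$ and on the other hand is $\iota^{-1}(\partial M)$, forces $\partial S \subset \partial M$ and $S^{\circ}\subset M^{\circ}$, giving condition (1). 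With condition (1) in hand the hypothesis of the lemma is satisfied, and the lemma promotes ``simple $b$-map'' to ``transverse to $\partial M$'', which is condition (2).

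The ``in particular'' clause then falls out of the same argument: if $S$ is an interior p-submanifold and $X$ is a boundary defining function for $M$, the equivalence just established tells us that $\iota$ is a simple $b$-map, so $X_{|S} = \iota^{*}X = e^{\varphi}y$ for some $\varphi \in C^{\infty}(S)$ and some boundary defining function $y$ on $S$; since $e^{\varphi}$ is nowhere vanishing, $X_{|S}$ is itself a boundary defining function on $S$.

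I do not anticipate a genuine obstacle in this proof: it is pure bookkeeping on top of the previous lemma. The only subtlety worth flagging is that one must invoke the \emph{simple} $b$-map property (the case $k=1$), rather than only the interior $b$-map property, because otherwise one would conclude merely that $\iota^{*}X$ is a positive power of a boundary defining function, which is strictly weaker than the statement we need for the ``in particular'' clause.
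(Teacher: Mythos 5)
Your proof is correct and follows exactly the route the paper intends: the paper gives no separate proof, stating only that the corollary is an immediate consequence of the preceding lemma, and your write-up simply makes that bookkeeping explicit (including the correct observation that the simple $b$-map property already forces $\iota^{-1}(\partial M)=\partial S$, so the lemma's hypothesis is available in the converse direction). The ``in particular'' clause is also handled correctly via the factorization $\iota^{*}X=e^{\varphi}y$.
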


Fix now a conformally compact metric $g$ on $M$. We want to show
that, if $S$ is an interior p-submanifold of $M$, then $S$ inherits
from $g$ a conformally compact metric. The key to this result is
the following
\begin{lem}
Let $u:M\to N$ be a simple $b$-map between compact manifolds with
boundary. Denote by $u^{\circ}:M^{\circ}\to N^{\circ}$ the restriction
of $u$ to the interiors. Then the differential $du^{\circ}:TM^{\circ}\to TN^{\circ}$
extends from the interior to a smooth bundle map $^{0}du:{^{0}TM}\to{^{0}TN}$
covering $u$, which we call the \emph{$0$-differential} of $u$.
Moreover, if $u$ is an immersion, $^{0}du$ is injective.
\end{lem}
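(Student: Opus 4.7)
My plan is to work locally. Away from the boundary, the anchor maps identify ${}^{0}TM \cong TM$ and ${}^{0}TN \cong TN$, so the bundle map $du^\circ$ already provides the required extension over $M^\circ$; the whole question therefore reduces to verifying smoothness and, for the second assertion, injectivity at boundary points. Fix boundary-adapted coordinates $(x, y^1, \ldots, y^{m-1})$ near a point $p \in \partial M$ and $(X, Y^1, \ldots, Y^{n-1})$ near $u(p) \in \partial N$, so that ${}^{0}TM$ is locally framed by $\{x\partial_x, \partial_{y^a}\}$ and ${}^{0}TN$ by $\{X\partial_X, \partial_{Y^i}\}$.

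The simple $b$-map hypothesis gives $u^*X = e^{\varphi} x$ for some $\varphi \in C^\infty(M)$; write also $U^i = u^*Y^i$. The strategy for the extension is to apply $du$ to each $0$-frame vector of $M$, expand the result in the (singular) frame $\{\partial_X, \partial_{Y^i}\}$ of $TN$, and convert it into the ${}^{0}TN$-frame via $\partial_X = X^{-1}(X\partial_X)$. The key observation that makes this work is that for any local section $V$ of ${}^{0}TM$, the function $V(X \circ u) = V(e^{\varphi} x)$ is divisible by $x$, hence by $X \circ u$, so multiplication by $X^{-1}$ introduces no singularity. The coefficients along $\partial_{Y^i}$ are derivatives of the smooth functions $U^i$, hence manifestly smooth. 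This produces the desired smooth bundle map ${}^{0}du : {}^{0}TM \to {}^{0}TN$ covering $u$.

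For the injectivity claim, the plan is to read off the matrix of ${}^{0}du_p$ for $p \in \partial M$ in the frames above. A short computation yields a block upper-triangular matrix whose $(1,1)$ entry is $1$, whose lower-left column is zero, and whose lower-right block equals the Jacobian of $u|_{\partial M}$ at $p$ in the boundary coordinates $y^a, Y^i$. The one non-trivial step, and what I expect to be the main obstacle, is showing that $u|_{\partial M}$ is itself an immersion whenever $u$ is. This follows by examining the \emph{ordinary} Jacobian of $u$ in the same coordinates: since $\partial_{y^a}(e^{\varphi} x)$ vanishes at $x = 0$, that Jacobian is also block upper-triangular at boundary points with the same lower-right block, so the immersion hypothesis on $u$ forces this block to have full column rank $m - 1$. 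Injectivity of ${}^{0}du_p$ then follows immediately from the block-triangular form: the bottom rows force the $\partial_{y^a}$-column coefficients to vanish, and the top row then forces the $x\partial_x$-column coefficient to vanish as well.
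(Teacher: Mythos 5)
There is a genuine error at the very first step: the local frames you write down for ${}^{0}TM$ and ${}^{0}TN$ are wrong. In this paper $\mathcal{V}_{0}\left(M\right)$ consists of vector fields that \emph{vanish} along $\partial M$ (the Mazzeo--Melrose $0$-structure), so in half-space coordinates ${}^{0}TM$ is framed by $x\partial_{x},x\partial_{y^{1}},\dots,x\partial_{y^{m-1}}$, not by $x\partial_{x},\partial_{y^{a}}$. The frame $\left\{ x\partial_{x},\partial_{y^{a}}\right\} $ is the frame of the $b$-tangent bundle (vector fields merely \emph{tangent} to the boundary), which is a different bundle. Consequently your computation, which is internally consistent, establishes the smooth extension and injectivity of the \emph{$b$-differential} ${}^{b}du:{}^{b}TM\to{}^{b}TN$, not of the $0$-differential claimed in the lemma. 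The distinction is not cosmetic: the lemma is used in Proposition \ref{prop:p-submanifolds-of-CC-are-CC} to restrict a conformally compact metric, which is a bundle metric on ${}^{0}TM$ and is \emph{degenerate} as a bilinear form on ${}^{b}TM$, so the $b$-version of the statement would not suffice there.

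The good news is that with the correct frames your strategy not only survives but simplifies, and becomes essentially the paper's proof. Normalizing the coordinates so that $u^{*}X=x$ (always possible for a simple $b$-map, absorbing $e^{\varphi}$ into $X$), one finds that the matrix of $du$ with respect to the $0$-frames $\left\{ x\partial_{x},x\partial_{y^{a}}\right\} $ and $\left\{ X\partial_{X},X\partial_{Y^{i}}\right\} $ coincides \emph{exactly} with the ordinary Jacobian of $u=\left(x,u^{1},\dots,u^{n}\right)$ in the half-space coordinates: the factors of $x$ coming from the source frame cancel against the factor $X^{-1}=\left(X\circ u\right)^{-1}=x^{-1}$ needed to express the image in the target frame. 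Smoothness up to the boundary is then immediate, and injectivity of ${}^{0}du_{p}$ is \emph{equivalent} to injectivity of $du_{p}$, with no need for your block-triangular analysis or for the intermediate claim that $u_{|\partial M}$ is an immersion (that claim is true, and your argument for it is correct, but it is only needed because the $b$-frame discards the $\partial_{x}u^{i}$ entries from the relevant column).
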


\begin{proof}
Let $p\in\partial M$, and call $q=u\left(p\right)$. Choose half-space
coordinates $\left(x,y^{1},...,y^{m}\right)$ for $M$ centered at
$p$, and half-space coordinates $\left(X,Y^{1},...,Y^{n}\right)$
for $N$ centered at $q$. Since $u$ is a simple $b$-map, we can
without loss of generality assume that $x=u^{*}X$. Thus, we can write
$u$ in these coordinates as $u=\left(x,u^{1},...,u^{n}\right)$.
In terms of the local frames $\partial_{x},\partial_{y^{i}}$ and
$\partial_{X},\partial_{Y^{j}}$ for $TM$ and $TN$, respectively,
we then have
\[
du=\left(\begin{matrix}1 & 0 & \cdots & 0\\
\frac{\partial u^{1}}{\partial x} & \frac{\partial u^{1}}{\partial y^{1}} & \cdots & \frac{\partial u^{1}}{\partial y^{m}}\\
\vdots & \vdots &  & \vdots\\
\frac{\partial u^{n}}{\partial x} & \frac{\partial u^{n}}{\partial y^{1}} & \cdots & \frac{\partial u^{n}}{\partial y^{m}}
\end{matrix}\right).
\]
Using the canonical identifications $^{0}TM_{|M^{\circ}}\equiv TM^{\circ}$
and $^{0}TN_{|N^{\circ}}\equiv TN^{\circ}$, we observe that this
expression is \emph{precisely} the expression of $du$ with respect
to the local frames $x\partial_{x},x\partial_{y^{i}}$ and $X\partial_{X},X\partial_{Y^{j}}$
for $^{0}TM$ and $^{0}TN$, in the interior. It follows that $^{0}du$
extends smoothly to a bundle map $^{0}TM\to{^{0}TN}$ as claimed,
and it is injective precisely when $du$ is injective.
\end{proof}


\begin{prop}
\label{prop:p-submanifolds-of-CC-are-CC}Let $\left(M,g\right)$ be
a conformally compact manifold, and let $S\subset M$ be an interior
p-submanifold. Call $\iota:S\to M$ the inclusion. Then $g_{|S}\equiv\iota^{*}g$
is a conformally compact metric on $S$. Moreover, if $\kappa_{\infty}^{g}$
and $\kappa_{\infty}^{\iota^{*}g}$ denote the sectional curvatures
at infinity of $\left(M,g\right)$ and $\left(S,\iota^{*}g\right)$
respectively, we have the pointwise inequality
\[
\kappa_{\infty}^{\iota^{*}g}\geq\kappa_{\infty|\partial S}^{g}.
\]
\end{prop}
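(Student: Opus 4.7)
The plan is to pull back the bundle metric $g$ on $^{0}TM$ to a bundle metric on $^{0}TS$ via the $0$-differential of $\iota$, and then to deduce the curvature-at-infinity inequality from the elementary fact that restriction of a linear functional to a subspace has dual norm at most the original one.

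I would first invoke Corollary \ref{cor:p-submanifold-iff-image-immersion-bmap} to see that the inclusion $\iota:S\to M$ is simultaneously an injective immersion and a simple $b$-map. The preceding lemma then provides a smooth bundle map $^{0}d\iota:{^{0}TS}\to{^{0}TM}$ covering $\iota$, which is fibrewise injective. I would then define a section $\iota^{*}g$ of $S^{2}({^{0}T^{*}S})$ by
\[
(\iota^{*}g)_{p}(v,w):=g_{\iota(p)}\bigl({^{0}d\iota}(v),\,{^{0}d\iota}(w)\bigr),\qquad v,w\in{^{0}T_{p}S}.
\]
Smoothness is inherited from $^{0}d\iota$ and $g$, and fibrewise positive definiteness from the fibrewise injectivity of $^{0}d\iota$; via the canonical identifications over the interiors, this section agrees with the usual pullback metric $\iota^{*}g|_{S^{\circ}}$. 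Hence $\iota^{*}g$ is a bundle metric on $^{0}TS$, i.e., a conformally compact metric on $S$ extending the ordinary pullback.

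For the curvature inequality I would pick a boundary defining function $X$ for $M$ and set $x:=\iota^{*}X$, which is a boundary defining function for $S$ by Corollary \ref{cor:p-submanifold-iff-image-immersion-bmap}. The identity $\iota^{*}(dX/X)=dx/x$, obvious on $S^{\circ}$ from $d(\iota^{*}X)=\iota^{*}dX$, persists over $\partial S$ when both sides are interpreted as smooth sections of $^{0}\Lambda^{1}S$, by continuity and density of $S^{\circ}$ in $S$. Since $^{0}d\iota_{p}$ is, by construction, an isometric injection from $({^{0}T_{p}S},\iota^{*}g_{p})$ into $({^{0}T_{\iota(p)}M},g_{\iota(p)})$ for every $p\in S$, the elementary linear-algebraic fact alluded to above yields, pointwise on $\partial S$,
\[
\bigl|dx/x\bigr|_{\iota^{*}g}^{2}=\bigl|({^{0}d\iota})^{*}(dX/X)\bigr|_{\iota^{*}g}^{2}\leq\bigl|dX/X\bigr|_{g}^{2}.
\]
Taking negatives gives precisely $\kappa_{\infty}^{\iota^{*}g}\geq\kappa_{\infty|\partial S}^{g}$.

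The only step requiring any real care is the identity $\iota^{*}(dX/X)=dx/x$ at the level of $^{0}\Lambda^{1}S$, and even that is either a two-line continuity argument or a short coordinate computation in the adapted half-space coordinates of the $0$-differential lemma; everything else reduces to standard bundle-theoretic bookkeeping and the restriction-of-functionals inequality, so I do not expect any serious obstacle.
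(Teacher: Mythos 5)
Your proposal is correct and follows essentially the same route as the paper: both establish that $\iota^{*}g$ is a bundle metric on $^{0}TS$ via the injective $0$-differential, and both reduce the curvature inequality to the elementary fact that restricting to a subspace cannot increase the dual norm of a covector. The only cosmetic difference is that you phrase this last step in terms of dual norms of $dX/X$ on $^{0}\Lambda^{1}$, whereas the paper passes to the compactified metrics $X^{2}g$ and $x^{2}\iota^{*}g$ and uses the equivalent statement that the gradient of a restriction is the orthogonal projection of the gradient.
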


\begin{proof}
Since $S$ is an interior p-submanifold, $\iota:S\to M$ is a simple
$b$-map and an injective immersion. Therefore, the bundle $^{0}TS$
can be identified via $^{0}d\iota$ with a subbundle of ${^{0}TM_{|S}}$.
The pull-back of $g$ is precisely the restriction of $g$ to $^{0}TS$:
it is a bundle metric on $^{0}TS$, hence a conformally compact metric
on $S$ as claimed. Now, let $X$ be a boundary defining function
for $M$, and let $x=\iota^{*}X$. Call $h=\iota^{*}g$. Let $\overline{h}=x^{2}h$
and $\overline{g}=X^{2}g$. Then $\iota$ is an isometric embedding
$\left(S,\overline{h}\right)\to\left(M,\overline{g}\right)$, and
moreover by definition we have
\begin{align*}
\kappa_{\infty}^{g} & =-\left|\nabla^{\overline{g}}X\right|_{\overline{g}|\partial M}^{2}\\
\kappa_{\infty}^{h} & =-\left|\nabla^{\overline{h}}x\right|_{\overline{h}|\partial S}^{2}.
\end{align*}
Thus, it suffices to prove that $\left|\nabla^{\overline{h}}x\right|_{\overline{h}}^{2}\leq\iota^{*}\left|\nabla^{\overline{g}}X\right|_{\overline{g}}^{2}$
along the boundary. This is immediate: in general, if $f\in C^{\infty}\left(M\right)$,
the gradient of $f_{|S}$ is precisely the orthogonal projection to
$TS$ of the gradient of $f$.
\end{proof}
Corollary \ref{cor:p-submanifold-iff-image-immersion-bmap} and Proposition
\ref{prop:p-submanifolds-of-CC-are-CC} justify the following definition:
\begin{defn}
Let $M,N$ be compact manifolds with boundary. An \emph{interior p-embedding}
$u:M\to N$ is a simple $b$-map which is also an injective immersion.
If $g$ and $h$ are conformally compact metrics on $M$ and $N$
respectively, $u$ is said to be an \emph{isometric interior p-embedding}
if $u^{*}h=g$.
\end{defn}

\section{Isometric p-Embeddings into Hyperbolic Spaces}

The $C^{\infty}$ Nash Isometric Embedding Theorem states that every
closed Riemannian manifold admits an isometric embedding into a sufficiently
high-dimensional Euclidean space. The following definition is therefore
well-posed:
\begin{defn}
Let $m\in\mathbb{N}$. We denote by $\mathrm{N}_{m}$ the smallest
positive integer $N$ such that every closed $m$-dimensional Riemannian
manifold $\left(M,g\right)$ admits an isometric embedding into $\mathbb{R}^{N}$.
\end{defn}

We are ready to state the main theorem of this paper. Given a non-zero
real number $\lambda$, denote by $\mathrm{H}^{N+1}\left(-\lambda^{2}\right)$
the hyperbolic space of constant sectional curvature $-\lambda^{2}$.
\begin{thm}
\label{thm:main-thm}Let $\left(M^{m},g\right)$ be a $m$-dimensional
conformally compact manifold. Suppose that the sectional curvature
at infinity of $g$ satisfies the pointwise inequality $\kappa_{\infty}^{g}>-\lambda^{2}$.
Then there exists an isometric interior p-embedding $u:\left(M,g\right)\to\mathrm{H}^{\mathrm{N}_{m}+1}\left(-\lambda^{2}\right)$.
\end{thm}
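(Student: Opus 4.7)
My plan is to work in the upper half-space model of $\mathrm{H}^{\mathrm{N}_{m}+1}(-\lambda^{2})$, whose metric reads $h_{\lambda} = \lambda^{-2} y^{-2}(dy^{2}+|dz|^{2})$ in coordinates $(y,z) \in (0,\infty) \times \mathbb{R}^{\mathrm{N}_{m}}$, and to seek the desired p-embedding in the form $u = (x, f)$, where $x$ is a boundary defining function for $M$ and $f:M \to \mathbb{R}^{\mathrm{N}_{m}}$ is smooth. Pulling back gives
\[
u^{*}h_{\lambda} \;=\; \frac{1}{\lambda^{2}x^{2}}\bigl(dx^{2} + f^{*}g_{\mathbb{R}^{\mathrm{N}_{m}}}\bigr),
\]
so $u$ is isometric precisely when $f^{*}g_{\mathbb{R}^{\mathrm{N}_{m}}} = \lambda^{2} x^{2} g - dx^{2}$. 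The problem therefore splits into two parts: (i) find a boundary defining function $x$ such that the smooth symmetric $2$-tensor $\tilde{g} := \lambda^{2} x^{2} g - dx^{2}$ is positive-definite on all of $M$; (ii) isometrically embed the compact Riemannian manifold $(M, \tilde{g})$ into $\mathbb{R}^{\mathrm{N}_{m}}$.

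\textbf{The boundary defining function.}
Step (i) is the heart of the argument, and where the strict hypothesis on $\kappa_{\infty}^{g}$ enters. For any boundary defining function $x_{0}$, positive-definiteness of $\lambda^{2} x_{0}^{2} g - dx_{0}^{2}$ at a point is equivalent to $|dx_{0}|_{x_{0}^{2}g}^{2} < \lambda^{2}$; by Remark \ref{rem:dx/x_canonical}, this smooth function equals $-\kappa_{\infty}^{g} < \lambda^{2}$ on $\partial M$, so by compactness of $\partial M$ there exists $\delta > 0$ such that $|dx_{0}|_{x_{0}^{2}g}^{2} < \lambda^{2}$ on the collar $\{x_{0} \leq \delta\}$. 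To force the bound also on the complement, I would replace $x_{0}$ by $x = \psi(x_{0})$ for a smooth $\psi:[0,\infty) \to [0,\infty)$ with $\psi(0) = 0$ and $\psi' > 0$; a direct computation gives
\[
|dx|_{x^{2}g}^{2} \;=\; \left(\frac{\psi'(x_{0})\, x_{0}}{\psi(x_{0})}\right)^{\!2} |dx_{0}|_{x_{0}^{2}g}^{2},
\]
and the multiplier $[\psi'(t)t/\psi(t)]^{2}$ extends smoothly to $[0,\infty)$ with value $1$ at $t=0$. The choice $\psi(t) = \epsilon\bigl(1-e^{-t/\epsilon}\bigr)$ makes the multiplier equal to $[(t/\epsilon)/(e^{t/\epsilon}-1)]^{2}$, which is bounded above by $1$ and tends to $0$ as $t/\epsilon \to \infty$. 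With $\delta$ already fixed, taking $\epsilon > 0$ small enough that this multiplier times $\max_{M}|dx_{0}|_{x_{0}^{2}g}^{2}$ is strictly below $\lambda^{2}$ on $\{x_{0} \geq \delta\}$, and combining with the trivial bound on $\{x_{0} \leq \delta\}$, produces the desired $x$.

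\textbf{Assembly and main obstacle.}
Given $\tilde{g}$, I would extend it to a smooth Riemannian metric on the double $DM$---a closed $m$-manifold---and apply the classical Nash Embedding Theorem to obtain an isometric embedding $F : DM \to \mathbb{R}^{\mathrm{N}_{m}}$. Setting $f := F|_{M}$ and $u := (x, f):M \to \mathrm{H}^{\mathrm{N}_{m}+1}(-\lambda^{2})$ in the upper half-space model, the identity $u^{*}h_{\lambda} = g$ is immediate from the pull-back formula above. The map $u$ is an injective immersion because $f$ is, and it is a simple $b$-map---hence an interior p-embedding by Corollary \ref{cor:p-submanifold-iff-image-immersion-bmap}---because its first coordinate is the boundary defining function $x$. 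The decisive difficulty is concentrated in Step (i): the strict inequality $\kappa_{\infty}^{g} > -\lambda^{2}$ supplies the uniform margin on $\partial M$ that the interior modification $x_{0} \mapsto \psi(x_{0})$ must respect, and the construction loses all headroom precisely when $\kappa_{\infty}^{g} = -\lambda^{2}$ is attained at some boundary point. This is why the method cannot reach the asymptotically hyperbolic case flagged in the introduction.
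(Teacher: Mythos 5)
Your proposal is correct, and its overall architecture coincides with the paper's: find a boundary defining function $x$ for which $\lambda^{2}x^{2}g-dx^{2}$ is a genuine Riemannian metric on the compact manifold with boundary $M$, extend it to the double, apply the classical Nash theorem, and assemble $u=(x,f)$ in the half-space model. Where you genuinely diverge is in the proof of the key lemma producing $x$. The paper (Lemma \ref{lem:very-special-bdf}) first builds a collar by flowing along the normalized gradient $\left|dr\right|_{\overline{g}}^{-2}\overline{\nabla}r$, puts $g$ into the normal form $K^{-2}\,dr^{2}/r^{2}+h(r)/r^{2}$, and then takes $x=e^{\varphi(r)}r$ with $1+r\partial_{r}\varphi$ interpolating from $1$ to $0$, so that $x$ becomes constant outside a sub-collar and $dx$ vanishes identically there. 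You instead reformulate positive-definiteness as the pointwise scalar inequality $|dx|^{2}_{x^{2}g}<\lambda^{2}$, observe that for an arbitrary boundary defining function this quantity restricts to $-\kappa_{\infty}^{g}<\lambda^{2}$ on $\partial M$ and hence the inequality holds on a collar by continuity, and then achieve it globally by the reparametrization $x=\psi(x_{0})$ with $\psi(t)=\epsilon(1-e^{-t/\epsilon})$, whose multiplier $[\psi'(t)t/\psi(t)]^{2}\leq 1$ decays in the interior; your computation of how $|dx|^{2}_{x^{2}g}$ transforms under $x_{0}\mapsto\psi(x_{0})$ is correct. This avoids the gradient-flow collar and the normal form entirely and is arguably more elementary; the paper's version buys slightly more (it prescribes the induced representative $h_{0}$ of the conformal infinity and makes $dx\equiv 0$ away from the boundary), though none of that is needed for the theorem, and your choice $\psi'(0)=1$ in fact also preserves the induced boundary metric. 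The one point you gloss over, which the paper dispatches in a sentence, is that the half-space model omits a point of the sphere at infinity: since $u(M)$ is compact, hence bounded in $[0,+\infty)\times\mathbb{R}^{\mathrm{N}_{m}}$, it avoids that point, and so $u$ does define an interior p-embedding into the compact ball model of $\mathrm{H}^{\mathrm{N}_{m}+1}(-\lambda^{2})$ as required.
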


The crucial step in the proof is the existence of a boundary defining
function for $\left(M,g\right)$ with special properties.
\begin{lem}
\label{lem:very-special-bdf}Let $\left(M,g\right)$ be a conformally
compact manifold, and assume that $\kappa_{\infty}^{g}>-1$. Let $h_{0}$
be a metric in the conformal infinity of $\left(M,g\right)$. Then
there exists a smooth boundary defining function $x$ on $M$ with
the following properties:
\begin{enumerate}
\item the conformally compactified metric $x^{2}g$ on $M$ induces the
metric $h_{0}$ on $\partial M$;
\item the symmetric $0$-$2$-tensor on $M$
\[
g-\frac{dx^{2}}{x^{2}}
\]
is positive-definite, i.e. it is a smooth conformally compact metric
on $M$.
\end{enumerate}
\end{lem}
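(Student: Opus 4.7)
The plan is to realize $x$ as a well-chosen reparametrization of any initial boundary defining function that already induces the prescribed representative $h_{0}$. First I would choose a boundary defining function $x_{0}$ on $M$ with $x_{0}^{2}g|_{\partial M}=h_{0}$; such an $x_{0}$ exists by taking any boundary defining function and multiplying it by a smooth positive factor whose restriction to $\partial M$ has the correct value. By Remark \ref{rem:dx/x_canonical}, $dx_{0}/x_{0}$ is a smooth section of $^{0}\Lambda^{1}$, so $\beta:=|dx_{0}/x_{0}|^{*}_{g}$ is a smooth function on all of $M$, with $\beta^{2}|_{\partial M}=-\kappa_{\infty}^{g}<1$ by hypothesis. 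Compactness of $\partial M$ and continuity of $\beta$ produce constants $c\in(0,1)$ and $\delta>0$ such that $\beta\leq c$ on the collar $\{x_{0}\leq\delta\}$.

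The modification I would then make is
\[
x:=\frac{x_{0}}{1+Kx_{0}}
\]
for a constant $K>0$ to be fixed. Since $x$ vanishes exactly on $\partial M$ and $dx|_{\partial M}=dx_{0}|_{\partial M}\neq 0$, this $x$ is a boundary defining function; and because $x=x_{0}+O(x_{0}^{2})$, the rescaling $x^{2}g$ agrees with $x_{0}^{2}g$ at $\partial M$, so property (1) persists. A short computation using the quotient rule then yields the key identity
\[
\frac{dx}{x}=\frac{1}{1+Kx_{0}}\cdot\frac{dx_{0}}{x_{0}},
\]
so that $|dx/x|^{*}_{g}=\beta/(1+Kx_{0})$ as smooth functions on $M$.

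The remaining task is to choose $K$ large enough that $\beta/(1+Kx_{0})<1$ pointwise; by pointwise Cauchy--Schwarz, this is equivalent to property (2), i.e.~to the positive-definiteness of $g-(dx/x)^{2}$ as a symmetric $0$-$2$-tensor. On the collar $\{x_{0}\leq\delta\}$ the estimate is automatic for any $K\geq 0$ because $\beta\leq c<1$ and $1+Kx_{0}\geq 1$. On the compact complement $\{x_{0}\geq\delta\}$, the continuous function $\beta$ is bounded by some $B<+\infty$, while $1+Kx_{0}\geq 1+K\delta$; any $K>\max\{0,(B-1)/\delta\}$ then forces the quotient strictly below $1$ on this region as well.

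The main obstacle is that the hypothesis $\kappa_{\infty}^{g}>-1$ controls $\beta$ only along $\partial M$, whereas $\beta$ is a priori not bounded by $1$ in the interior. The content of the construction is that the reparametrization $x=x_{0}/(1+Kx_{0})$ agrees with $x_{0}$ to first order at the boundary---so it preserves property (1) and the asymptotic slope $dx/x \sim dx_{0}/x_{0}$---but saturates at the constant $1/K$ deep in the interior, which damps $dx/x$ by exactly the factor $1/(1+Kx_{0})$ needed to absorb the interior size of $\beta$ once $K$ is taken sufficiently large.
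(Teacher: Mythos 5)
Your proposal is correct, and it takes a genuinely different route from the paper. The paper's proof builds a collar by flowing along the normalized $\overline{g}$-gradient of an initial defining function $r$, writes $g$ in the normal form $K^{-2}\,dr^{2}/r^{2}+h\left(r\right)/r^{2}$ (no cross terms, thanks to the gradient-flow collar), and then takes $x=e^{\varphi\left(r\right)}r$ with $\varphi$ chosen so that $1+r\partial_{r}\varphi$ interpolates between $1$ at the boundary and $0$ deep in the collar; positivity is read off coefficient by coefficient, and $x$ is extended to the rest of $M$ as a constant. You instead observe that for a $0$-$1$-form $\alpha$ the tensor $g-\alpha\otimes\alpha$ is positive-definite exactly when $\left|\alpha\right|_{g}<1$ pointwise (sharp Cauchy--Schwarz, with equality tested on $\alpha^{\sharp}$), which reduces the lemma to the scalar inequality $\left|dx/x\right|_{g}<1$ on all of $M$; the global algebraic reparametrization $x=x_{0}/\left(1+Kx_{0}\right)$ then gives $dx/x=\left(1+Kx_{0}\right)^{-1}dx_{0}/x_{0}$, and a large $K$ damps the interior values while leaving the boundary behaviour (and hence property (1) and the value $\left|dx/x\right|_{g|\partial M}^{2}=-\kappa_{\infty}^{g}<1$) untouched. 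All the supporting points check out: $\left|dx_{0}/x_{0}\right|_{g}$ is continuous on the compact manifold $M$ because $dx_{0}/x_{0}$ is a smooth section of $^{0}\Lambda^{1}$ and $g$ induces a smooth dual bundle metric there, so the bound $B$ and the collar constant $c<1$ exist. Your argument is more elementary --- no gradient flow, no normal form, and the defining function is given by a single global formula --- and it isolates cleanly where strictness of $\kappa_{\infty}^{g}>-1$ enters; the paper's normal-form computation is more standard in the field and is reused verbatim in the closing remark to explain why the borderline case $\kappa_{\infty}^{g}\left(p\right)=-1$ cannot be handled this way.
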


\begin{proof}
Choose an auxiliary boundary defining function $r$ for $M$ such
that $\overline{g}:=r^{2}g$ induces the metric $h_{0}$ on $\partial M$,
and define $K^{2}=\left|dr\right|_{\overline{g}}^{2}$. Call $\overline{\nabla}r$
the $\overline{g}$-gradient of $r$, and define $\overline{V}=K^{-2}\overline{\nabla}r$.
Since $r$ is a boundary defining function, $K^{2}$ is nowhere zero
near $\partial M$, and therefore $\overline{V}$ is well-defined
near $\partial M$ as well; moreover, $\overline{V}$ is transverse
to $\partial M$ and inward-pointing. Therefore, we can use the flow
of $\overline{V}$ starting at $\partial M$ to define a collar
\[
[0,\varepsilon)\times\partial M\hookrightarrow M
\]
of $\partial M$ in $M$. Note that, in this collar, we have 
\begin{align*}
\overline{V}r & =\left|dr\right|_{\overline{g}}^{-2}\left(\overline{\nabla}r\right)r\\
 & =\left|dr\right|_{\overline{g}}^{-2}\left|\overline{\nabla}r\right|_{\overline{g}}^{2}\\
 & \equiv1.
\end{align*}
Therefore, we can identify $r$ with the half-line coordinate in the
collar $[0,\varepsilon)\times\partial M$, and we can identify $\overline{V}$
with $\partial_{r}$. Since $\overline{V}$ is proportional to $\overline{\nabla}r$,
the level sets of $r$ are orthogonal to $\overline{V}$; therefore,
the metric $\overline{g}$ can be written in the collar as
\[
\overline{g}=\left|dr\right|_{\overline{g}}^{-2}dr^{2}+h\left(r\right),
\]
where $h:[0,\varepsilon)\to C^{\infty}\left(\partial M;S^{2}\left(T^{*}\partial M\right)\right)$
is a smooth path of metrics starting at $h_{0}$. Dividing by $r^{2}$,
we obtain the following form for the conformally compact metric $g$:
\[
g=\frac{1}{K^{2}}\frac{dr^{2}}{r^{2}}+\frac{h\left(r\right)}{r^{2}}.
\]
This is similar to the normal form for asymptotically hyperbolic metric
used, for example, in \cite{GrahamLee, Graham_Volume}, where $K^{2}\equiv1$.
In our case, $K^{2}$ might be non-constant. However, since
\[
K_{|\partial M}^{2}=\left|\frac{dr}{r}\right|_{g|\partial M}^{2}=-\kappa_{\infty}^{g},
\]
we have $K_{|\partial M}^{2}<1$. By choosing $\varepsilon$ small
enough, we can then ensure that $K^{2}<1$ everywhere in the collar.

We will now find a boundary defining function $x$ satisfying the
properties claimed in the statement of the lemma, which on the collar
takes the form $x=e^{\varphi\left(r\right)}r$ for some smooth function
$\varphi:[0,\varepsilon)\to\mathbb{R}$. On the collar, we can write
\[
dx=e^{\varphi}\left(1+r\partial_{r}\varphi\right)dr;
\]
this implies that $x$ is indeed a boundary defining function, because
$dx_{|\partial M}=\left(e^{\varphi}dr\right)_{|\partial M}$. Therefore,
we can write
\begin{equation}
g-\frac{dx^{2}}{x^{2}}=\left(\frac{1}{K^{2}}-\left(1+r\partial_{r}\varphi\right)^{2}\right)\frac{dr^{2}}{r^{2}}+\frac{h\left(r\right)}{r^{2}}\label{eq:adjusted-metric}
\end{equation}
Suppose that we can find a smooth function $\varphi:[0,\varepsilon)\to\mathbb{R}$
such that:
\begin{enumerate}
\item $\varphi\left(0\right)=0$;
\item $\left(1+r\partial_{r}\varphi\right)^{2}\leq1$;
\item $1+r\partial_{r}\varphi\equiv0$ for $r\geq\varepsilon/2$.
\end{enumerate}
The fact that $1+r\partial_{r}\varphi\equiv0$ for $r\geq\varepsilon/2$
implies that $dx\equiv0$ on the portion $[\varepsilon/2,\varepsilon)\times\partial M$
of the collar, and therefore, we can extend $x$ smoothly to the whole
of $M$ in such a way that $x$ is constant on the complement of the
sub-collar $[0,\varepsilon/2)\times\partial M$. On this complement,
we have
\[
g-\frac{dx^{2}}{x^{2}}\equiv g
\]
and therefore this $0$-$2$-tensor is certainly positive-definite
there. Now, the condition $\left(1+r\partial_{r}\varphi\right)^{2}\leq1$
together with the inequality $K^{2}<1$ implies that
\[
\frac{1}{K^{2}}-\left(1+r\partial_{r}\varphi\right)^{2}>0.
\]
This guarantees that the symmetric $0$-$2$-tensor (\ref{eq:adjusted-metric})
is positive-definite in the collar. Finally, the fact that $\varphi\left(0\right)=0$
guarantees that $x^{2}g$ and $r^{2}g$ induce the same metric $h_{0}$
on the boundary.

It is very easy to see that a function $\varphi$ that satisfies the
three points above exists. Choose any smooth function $Q:[0,\varepsilon)\to\left[0,1\right]$
such that $Q\left(0\right)=0$ and $Q\left(r\right)=1$ for $r\geq\varepsilon/2$.
Since $Q$ vanishes at the origin, Taylor's Theorem implies that the
function $Q\left(r\right)/r$ is still smooth as a map $[0,\varepsilon)\to\mathbb{R}$.
Now, define
\[
\varphi\left(r\right)=-\int_{0}^{r}\frac{Q\left(t\right)}{t}dt.
\]
Then $\varphi$ is smooth, it satisfies $\varphi\left(0\right)=0$,
and moreover we have
\[
\partial_{r}\varphi=-\frac{Q\left(r\right)}{r},
\]
so that
\[
1+r\partial_{r}\varphi=1-Q\left(r\right).
\]
This function is bounded between $0$ and $1$, and vanishes identically
for $r\geq\varepsilon/2$.
\end{proof}
We can finally complete the proof of the main result:


\begin{proof}[Proof of Theorem~\ref{thm:main-thm}]First of all, we
can assume without loss of generality that $\lambda=1$. To see this,
denote by $h_{-\lambda^{2}}$ the hyperbolic $0$-metric on the compact
unit ball $B^{\mathrm{N}_{m}+1}$. Then $h_{-\lambda^{2}}$ is related
to the standard hyperbolic metric $h_{-1}$ by the constant rescaling
\[
h_{-\lambda^{2}}=\lambda^{-2}h_{-1}.
\]
Therefore, an interior p-embedding $u:M\to B^{\mathrm{N}_{m}+1}$
is isometric as a map $\left(M,g\right)\to\text{H}^{\mathrm{N}_{m}+1}\left(-\lambda^{2}\right)$
if and only if it is isometric as a map $\left(M,\lambda^{2}g\right)\to\text{H}^{\mathrm{N}_{m}+1}\left(-1\right)$.
Now, choosing an auxiliary boundary defining function $r$, we see
that the sectional curvature at infinity rescales as
\begin{align*}
\kappa_{\infty}^{\lambda^{2}g} & =-\left|\frac{dr}{r}\right|_{\lambda^{2}g|\partial M}^{2}\\
 & =-\lambda^{-2}\left|\frac{dr}{r}\right|_{g|\partial M}^{2}\\
 & =\lambda^{-2}\kappa_{\infty}^{g}.
\end{align*}
Therefore, we have $\kappa_{\infty}^{g}>-\lambda^{2}$ if and only
if $\kappa_{\infty}^{\lambda^{2}g}>-1$.

Assume from now on that $\kappa_{\infty}^{g}>-1$. We can further
simplify the argument by working in the half-space model of hyperbolic
space. This is the manifold $[0,+\infty)\times\mathbb{R}^{\mathrm{N}_{m}}$,
with coordinates $\left(X,\boldsymbol{Y}\right)$ with $\boldsymbol{Y}=\left(Y^{1},...,Y^{\mathrm{N}_{m}}\right)$
and equipped with the hyperbolic $0$-metric
\[
h_{-1}=\frac{dX^{2}+d\boldsymbol{Y}^{2}}{X^{2}}.
\]
In this model, one of the points at infinity is the direction $X^{2}+\left|\boldsymbol{Y}\right|^{2}\to+\infty$.
However, we can ignore this point, since $M$ is compact and therefore
any interior $p$-embedding $u:M\to[0,+\infty)\times\mathbb{R}^{\mathrm{N}_{m}}$
has compact image.

The proof is then reduced to find an interior p-embedding $u:M\to[0,+\infty)\times\mathbb{R}^{\mathrm{N}_{m}}$
satisfying $u^{*}h_{-1}=g$. By Lemma \ref{lem:very-special-bdf},
we can choose a boundary defining function $x$ on $M$ such that
the symmetric $0$-$2$-tensor $g-dx^{2}/x^{2}$ is positive-definite,
i.e. the symmetric $2$-tensor $G:=x^{2}g-dx^{2}$ is a metric on
$M$. Now, denote by $\tilde{M}$ a closed $m$-dimensional manifold
containing $M$ as a submanifold; for example, we could take $\tilde{M}$
to be the smooth double of $M$. Let $\tilde{G}$ be a smooth metric
on $\tilde{M}$ whose restriction to $M$ is $G$. By the Nash Embedding
Theorem, there exists a smooth isometric embedding $\tilde{v}:\left(\tilde{M},\tilde{G}\right)\to\mathbb{R}^{\mathrm{N}_{m}}$.
Restricting $\tilde{v}$ to $M\subset\tilde{M}$, we get an isometric
embedding $v:\left(M,G\right)\to\mathbb{R}^{\mathrm{N}_{m}}$. The
Euclidean metric on $\mathbb{R}^{\mathrm{N}_{m}}$ with coordinates
$\boldsymbol{Y}=\left(Y^{1},...,Y^{\mathrm{N}_{m}}\right)$ is precisely
$d\boldsymbol{Y}^{2}$. Therefore, the fact that $v:\left(M,G\right)\to\mathbb{R}^{\mathrm{N}_{m}}$
is an isometric embedding means precisely that $dv^{2}=G$.

Define now the map
\begin{align*}
u:M & \to[0,+\infty)\times\mathbb{R}^{\mathrm{N}_{m}}\\
p & \mapsto\left(x\left(p\right),v\left(p\right)\right).
\end{align*}
We claim that this map is an isometric interior p-embedding $\left(M,g\right)\to\mathrm{H}^{\mathrm{N}_{m}+1}\left(-1\right)$.
First of all, $u$ is clearly a simple $b$-map: by construction,
the pull-back of the boundary defining function $X$ for $\mathrm{H}^{\mathrm{N}_{m}+1}\left(-1\right)$
is $x$, a boundary defining function for $M$ by construction. Now,
$u$ is an injective immersion, because $v$ is an injective immersion.
Therefore, $u$ is an interior p-embedding. It remains to check that
$u$ is indeed isometric. By construction, the conformally compact
metric on $M$ induced by $u$ is
\[
u^{*}h_{-1}=\frac{dx^{2}}{x^{2}}+\frac{dv^{2}}{x^{2}};
\]
substituting $dv^{2}=G=x^{2}g-dx^{2}$, we obtain the claim.\end{proof}

We conclude with a natural remark arising from the previous theorem.
Suppose that the sectional curvature at infinity $\kappa_{\infty}^{g}$
satisfies the pointwise inequality $\kappa_{\infty}^{g}\geq-\lambda^{2}$,
and there is at least a point $p\in\partial M$ where the equality
is achieved. In this case, the proof given in this paper breaks down.
To see this, by possibly rescaling the metric, we can assume without
loss of generality that $\kappa_{\infty}^{g}\geq-1$ and that $\kappa_{\infty}^{g}\left(p\right)=-1$.
As in the proof of Lemma \ref{lem:very-special-bdf}, choose a boundary
defining function $r$, set $\overline{g}=r^{2}g$, and use the flow
of the rescaled gradient $\overline{V}:=\left|dr\right|_{\overline{g}}^{-2}\overline{\nabla}r$
to write $g$ in the form
\[
g=\frac{1}{\left|dr\right|_{\overline{g}}^{2}}\frac{dr^{2}}{r^{2}}+\frac{h\left(r\right)}{r^{2}}
\]
in a small collar of $\partial M$ in $M$ induced by $\overline{V}$.
Lemma \ref{lem:very-special-bdf} implies that, if $\kappa_{\infty}^{g}>-1$,
then there exists a boundary defining function $x$ such that the
$0$-$2$-tensor $g-\left(dx/x\right)^{2}$ is positive-definite.
However, in the present situation, for \emph{every} boundary defining
function $x$ the $0$-$2$-tensor $g-dx^{2}/x^{2}$ degenerates at
$p$. Indeed, in the collar we can write
\[
g-\frac{dx^{2}}{x^{2}}=\frac{1}{\left|dr\right|_{\overline{g}}^{2}}\frac{dr^{2}}{r^{2}}-\frac{dx^{2}}{x^{2}}+\frac{h\left(r\right)}{r^{2}}.
\]
As explained in Remark \ref{rem:dx/x_canonical}, the $0$-$1$-forms
$dx/x$ and $dr/r$ coincide on $\partial M$; moreover, by definition,
$\kappa_{\infty}^{g}=-\left|dr\right|_{\overline{g}|\partial M}^{2}$,
and $\left|dr\right|_{\overline{g}}^{2}\left(p\right)=1$ by hypothesis.
It follows that $\left(g-dx^{2}/x^{2}\right)_{p}$ is not positive-definite;
therefore, Lemma \ref{lem:very-special-bdf} cannot be extended to
the borderline case where $\kappa_{\infty}^{g}=-1$ at some point
of $\partial M$. This, of course, does not imply that Theorem \ref{thm:main-thm}
fails when the equality in $\kappa_{\infty}^{g}\geq-\lambda^{2}$
is attained at some boundary point. We therefore close with the following
question:

\begin{question}Let $\left(M,g\right)$ be conformally compact, and
assume that $\kappa_{\infty}^{g}\geq-\lambda^{2}$ for some constant
$\lambda>0$. Can $\left(M,g\right)$ be isometrically p-embedded
into the hyperbolic space $\mathrm{H}^{\mathrm{N}_{m}+1}\left(-\lambda^{2}\right)$?
In particular, if $\left(M,g\right)$ is asymptotically hyperbolic,
can it be isometrically p-embedded into $\mathrm{H}^{\mathrm{N}_{m}+1}\left(-1\right)$?\end{question}

The impression of the author is that the answer to this question is
\emph{yes}; however, proving such a result (if true) may require much
more analytic work. Günther cleverly manages to reduce the proof of
the $C^{\infty}$ Nash Embedding Theorem to a standard fixed point
argument, by making appropriate use of elliptic pseudodifferential
theory on a closed manifold. A possible path towards the proof could
be to try and adapt Günther's proof to the present context, using
the elliptic theory of $0$-pseudodifferential operators of Mazzeo--Melrose
\cite{MazzeoMelroseResolvent, MazzeoPhD, MazzeoEdgeI} in place of
the standard elliptic theory on closed manifolds.

\bibliography{allpapers}{}
\bibliographystyle{alpha}
\newpage
\end{document}